\numberwithin{equation}{section}
\newcommand{\Z}{\mathbb{Z}}
\newcommand{\N}{\mathbb{N}}
\newcommand{\R}{\mathbb{R}}
\newcommand{\Cm}{\mathbb{C}}
\newcommand{\eps}{\varepsilon}
\newcommand{\F}{\mathcal{F}}
\newcommand{\G}{\mathcal{G}}
\DeclareMathOperator{\esssup}{ess\sup} 
\DeclareMathOperator{\essinf}{ess\inf}
\renewcommand{\phi}{\varphi}
\newtheorem{Thm}{Theorem}[section]
\newtheorem{theorem}[Thm]{Theorem}
\newtheorem{lemma}[Thm]{Lemma}
\newtheorem{corollary}[Thm]{Corollary}
\newtheorem{remark}[Thm]{Remark}
\newtheorem{definition}{Definition}
\begin{document}
\sloppy

\title[]{Openness of the frame set on the hyperbolas}
\author{Aleksei Kulikov}
\address{Tel Aviv University, School of Mathematical Sciences, Tel Aviv, 69978, Israel,
\newline {\tt lyosha.kulikov@mail.ru} 
}
\begin{abstract} { We prove that for the functions of the form $g(x) = h(x) + \frac{C}{x+i}$, where $h$ belongs to the continuous Wiener algebra $W_0$, the intersection of the frame set $\F_g$ with every hyperbola $\{\alpha, \beta > 0 \mid \alpha\beta = c\}$ is open in the relative topology. In particular, this applies to all rational functions $g$.
}
\end{abstract}
\maketitle
\section{Introduction}
Given a function $g\in L^2(\R)$ and numbers $\alpha, \beta > 0$, we consider the system of time-frequency shifts of the function $g$ with respect to the lattice $\alpha\Z\times \beta\Z$
$$\G(g;\alpha, \beta) = \{g_{n, m}(x) = g(x-n\alpha)e^{2\pi i m\beta x}\}_{n, m\in \Z}.$$

One of the main questions of the Gabor analysis is to determine when $\G(g;\alpha, \beta)$ is a frame, that is when there exist positive constants $A, B$ such that for all $f\in L^2(\R)$ we have
\begin{equation}\label{frame bound}
A||f||^2 \le \sum_{n, m\in\Z} |\langle f, g_{n, m}\rangle|^2\le B||f||^2.
\end{equation}

Usually we fix the function $g$ and ask for the characterization of its frame set $\F_g$, that is the set of pairs $(\alpha, \beta)$ such that  $\G(g;\alpha, \beta)$ is a frame. Over the years this was achieved for many different functions $g$  \cite{bs, DS, DZ, Jans2, Jans3, JansStr, L, S, SW}, as well as classes of functions $g$ \cite{bkl, Gro1, Gro2}. However, this list is still fairly restrictive and so there is an interest in the general properties of the frame set, under some hypothesis on the function $g$. 

Arguably the most important and well-known result of this sort is the density theorem which says that if $\alpha\beta > 1$ then  $\G(g;\alpha, \beta)$ is never a frame (see \cite{Jans5} for a simple self-contained proof). In fact, if $\alpha\beta > 1$ then the system $\G(g;\alpha, \beta)$ is not even complete in $L^2(\R)$ \cite{Rie}. If we know that the functions $xg(x)$ and $\xi\hat{g}(\xi)$ are also in $L^2(\R)$, then the Balian--Low theorem \cite{bal, low} says that any pair $(\alpha, \beta)$ with $\alpha\beta = 1$ also does not give us a frame. In general these are the only possible restrictions based only on the decay and smoothness of the function $g$, since when $g$ is a gaussian $e^{-\pi x^2}$ its frame set is all pairs $(\alpha, \beta)$ with $\alpha\beta < 1$ \cite{L, S, SW}. In the other direction, if the function $g$ is Schwartz then all pairs $(\alpha, \beta)$ with small enough $\alpha$ and $\beta$ necessarily belong to the frame set \cite{BC}.

Another type of results are the ones that establish the geometric properties of the frame set itself. If the function $g$ belongs to the Feichtinger algebra $M^1$  then $\F_g$ is an open subset of $\R^2$ \cite{Fei}. Belonging to the Feichtinger algebra $M^1$ or the continuous Wiener algebra $W_0$ are also the weakest known general conditions for which the right-hand estimate in \eqref{frame bound} holds. However, for many functions the frame set is the full set of pairs with $\alpha\beta \le 1$, such as $g(x) = \frac{1}{x+i}$ \cite{Jans2} and more generally $g(x)$ being a Herglotz rational function \cite{bkl}, which is not open. In this paper we show that for a wide class of functions we have a weaker form of openness, the openness on the hyperbolas $\alpha\beta = c$. To state it it would be convenient to introduce the following slightly artificial definition.
\begin{definition}
We say that a continuous function $A:\R\to \Cm$ belongs to the class $C$ if there exist constants $0 < \delta < \Delta$ and $A_1, A_2\in \R$ such that $A(x) = 0, |x| < \delta$, $A(x) = A_1, x < -\Delta$ and $A(x) = A_2, x > \Delta$.
\end{definition}
We will also need the definition of the Wiener algebra.
\begin{definition}A measurable function $f:\R\to \Cm$ is said to belong to the Wiener algebra $W$ if 
$$||f||_W = \sum_{n\in \Z} \esssup_{n < x < n+1} |f(x)| < \infty.$$
\end{definition}
By $W_0$ we will denote the continuous Wiener algebra, which is $W\cap C(\R)$.

\begin{theorem}\label{main}
Let $A$ be a function from the class $C$, $\gamma \in \R$ and $h\in W_0$. Consider the function $g(x) = A(x)|x|^\gamma + h(x)$. For any $c > 0$ the set of $\alpha > 0$ such that $(\alpha, \frac{c}{\alpha})$ belongs to $\F_g$, is open. 
\end{theorem}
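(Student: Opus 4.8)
The plan is to reduce the assertion to a window-perturbation problem at a \emph{fixed} lattice by means of the unitary dilation $D_\lambda f(x) = \lambda^{1/2}f(\lambda x)$, $\lambda > 0$. Since $D_\lambda\bigl(g(x-n\alpha)e^{2\pi i m\beta x}\bigr) = (D_\lambda g)\bigl(x - n\alpha/\lambda\bigr)e^{2\pi i m\lambda\beta x}$, the system $\G(g;\alpha,\beta)$ is a frame if and only if $\G(D_\lambda g;\alpha/\lambda,\lambda\beta)$ is; in particular, fixing $c > 0$ and a point $\alpha_0$ with $(\alpha_0,c/\alpha_0)\in\F_g$ and writing $\beta_0 = c/\alpha_0$, the point $(\lambda\alpha_0, c/(\lambda\alpha_0))$ belongs to $\F_g$ if and only if $\G(D_\lambda g;\alpha_0,\beta_0)$ is a frame. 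So it is enough to prove: whenever $\G(g;\alpha_0,\beta_0)$ is a frame, $\G(D_\lambda g;\alpha_0,\beta_0)$ is a frame for every $\lambda$ sufficiently close to $1$.

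A direct computation, using $\abs{\lambda x}^\gamma = \lambda^\gamma\abs{x}^\gamma$, gives with $t = t(\lambda) := \lambda^{1/2+\gamma}-1$
\[
D_\lambda g - g = t\,A(x)\abs{x}^\gamma + \tilde q_\lambda,\qquad \tilde q_\lambda := \lambda^{1/2+\gamma}\bigl(A(\lambda x)-A(x)\bigr)\abs{x}^\gamma + \bigl(D_\lambda h - h\bigr).
\]
Here the definition of the class $C$ is precisely what is needed: because $A$ vanishes near $0$ and is constant outside a compact set, for $\lambda$ near $1$ the function $A(\lambda x) - A(x)$ is supported in one fixed compact set that avoids the origin, on which $\abs{x}^\gamma$ is bounded and $A(\lambda x) - A(x)\to 0$ uniformly as $\lambda\to 1$. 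Combined with the continuity of the dilation action on $W_0$ at $\lambda = 1$, this yields $\norm{\tilde q_\lambda}_W\to 0$. Thus the only part of the perturbation $D_\lambda g - g$ that is not small in the Wiener norm is the scalar multiple $t\,A(x)\abs{x}^\gamma$, and the scalar $t$ may be made arbitrarily small.

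The observation that makes the perturbation argument run is that $A(x)\abs{x}^\gamma = g - h$ automatically generates a \emph{Bessel} system over $\alpha_0\Z\times\beta_0\Z$: the frame $\G(g;\alpha_0,\beta_0)$ is in particular Bessel, $\G(h;\alpha_0,\beta_0)$ is Bessel because $h\in W_0$, and a difference of two Bessel Gabor systems over the same lattice is Bessel; call the resulting bound $B_A$. This is the step where one would otherwise be stuck, since $A\abs{x}^\gamma$ need lie neither in $W_0$ nor in $M^1$, and its decay (e.g.\ $\sim\abs{x}^{-1}$ in the model case $g(x) = \tfrac{C}{x+i}$) is too slow for the usual Wiener-amalgam Bessel criterion — yet Bessel-ness is handed to us for free by the hypothesis. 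Consequently the analysis operator of the perturbation $t\,A\abs{x}^\gamma + \tilde q_\lambda$ has norm at most $\abs{t}\sqrt{B_A} + \sqrt{B_{\tilde q_\lambda}}$ with $B_{\tilde q_\lambda}\lesssim_{\alpha_0,\beta_0}\norm{\tilde q_\lambda}_W^2$, so this perturbation is Bessel with bound tending to $0$ as $\lambda\to 1$. Now the standard frame-perturbation lemma — if $\G(g;\alpha_0,\beta_0)$ is a frame with lower bound $A_0$ and $\G(e;\alpha_0,\beta_0)$ is Bessel with bound $\eps < A_0$, then $\G(g+e;\alpha_0,\beta_0)$ is a frame with bounds $(\sqrt{A_0}-\sqrt{\eps})^2$ and $(\sqrt{B_0}+\sqrt{\eps})^2$ — finishes the proof: for $\lambda$ close enough to $1$ the perturbation's Bessel bound falls below $A_0$, hence $\G(D_\lambda g;\alpha_0,\beta_0)$ is a frame, i.e.\ $(\lambda\alpha_0,c/(\lambda\alpha_0))\in\F_g$, and as $\lambda$ ranges over a neighborhood of $1$ this produces a neighborhood of $\alpha_0$ inside the set under consideration.

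Two ingredients remain, both routine. First, the dilation group acts continuously on $W_0$ at $\lambda = 1$: writing $h = h_1 + h_2$ with $h_1$ continuous of compact support and $\norm{h_2}_W$ small, one has $\norm{D_\lambda h_1 - h_1}_W\to 0$ from uniform convergence together with uniformly bounded supports, while $\norm{D_\lambda h_2 - h_2}_W$ is controlled by the elementary uniform bound $\norm{D_\lambda h}_W\le C\norm{h}_W$ valid for $\lambda$ in a neighborhood of $1$. Second, $\G(q;\alpha_0,\beta_0)$ has Bessel bound $\lesssim_{\alpha_0,\beta_0}\norm{q}_W^2$, which is classical. The genuine difficulty is the conceptual one isolated above — recognizing that the slowly decaying, non-Wiener tail $A\abs{x}^\gamma$ is nonetheless a Bessel generator — after which the dilation reduction converts the statement into essentially a one-line application of Gabor frame perturbation theory, the hypothesis on $A$ entering only to ensure $\norm{\tilde q_\lambda}_W\to 0$.
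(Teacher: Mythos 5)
Your proof is correct, and its skeleton coincides with the paper's: reduce motion along the hyperbola $\alpha\beta=c$ to a dilation of the window at a fixed lattice, regard the dilated window as a perturbation of $g$, and control the perturbation's Bessel bound via the Wiener-norm estimate of Lemma \ref{W lemma} together with the triangle inequality for analysis operators. The genuine divergence is in how you handle the term $A(x)|x|^\gamma$, which is the crux because it need not belong to $W$. The paper dispenses with it by renormalizing the dilation non-unitarily: it compares $g(x)$ with $s^{-\gamma}g(sx)$ rather than with $s^{1/2}g(sx)$ (a nonzero scalar multiple of the window does not change membership in the frame set), and the homogeneity $|sx|^\gamma=s^\gamma|x|^\gamma$ then makes the \emph{entire} difference $g(x)-s^{-\gamma}g(sx)=(A(x)-A(sx))|x|^\gamma+(h(x)-s^{-\gamma}h(sx))$ small in $\|\cdot\|_W$, so one application of the Wiener Bessel bound finishes the job. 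You instead keep the unitary normalization, accept the residual $t\,A(x)|x|^\gamma$ with $t=\lambda^{1/2+\gamma}-1$, and observe that $A(x)|x|^\gamma=g-h$ is automatically a Bessel generator over $\alpha_0\Z\times\beta_0\Z$, being a difference of two Bessel generators. Both devices work; the paper's is a bit more economical (no separate Bessel bound for $A|x|^\gamma$ is needed and the perturbation is a single $W$-small function), while yours is somewhat more flexible, since it only asks that $D_\lambda g-g$ split as a small scalar times a fixed Bessel generator plus a $W$-small remainder rather than relying on the exact cancellation of the normalizing factor. Your remaining ingredients --- continuity of the dilation action on $W_0$ at $\lambda=1$ via the decomposition $h=h_1+h_2$, and the covering bound $\|D_\lambda h\|_W\le C\|h\|_W$ for $\tfrac12<\lambda<2$ --- are exactly the paper's estimates for $\|h(x)-h(sx)\|_W$ and $\|A(x)-A(sx)\|_W$ in lightly different clothing.
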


\begin{remark}
The theorem also applies to the situations for which we know only the upper or lower bound in \eqref{frame bound}, in which case the set of $\alpha$ for which it is satisfied is open. Note also that for $\gamma \ge -\frac{1}{2}$ the function $g$ may not even belong to $L^2(\R)$, but the proof works if we consider only the Schwartz test functions $f$.
\end{remark}
Since $M^1\subset W_0$ \cite{feich} (see also \cite[Proposition 12.1.4]{Gro}), this theorem also automatically holds for $h\in M^1$.

By taking $\gamma = -1$ and choosing specific functions $A$ and $h$ we can show that this theorem applies to all rational functions $g$.
\begin{corollary}
Let $g(x) = \frac{P(x)}{Q(x)}$, where $P$ and $Q$ are polynomials with $\deg(P) < \deg(Q)$ and $Q(x) \neq 0, x\in \R$. Then for any $c > 0$ the set of $\alpha > 0$ such that $(\alpha, \frac{c}{\alpha})$ belong to $\F_g$, is open. 
\end{corollary}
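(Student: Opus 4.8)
The plan is to deduce the Corollary from Theorem~\ref{main} with $\gamma = -1$, by splitting off the $1/x$--tail of $g$ into a term of the form $A(x)|x|^{-1}$ with $A$ in the class $C$. First set $a := \lim_{x\to+\infty} x g(x)$; this limit exists and is finite because $\deg P < \deg Q$. Moreover $xg(x) = xP(x)/Q(x)$ is a rational function, so $\lim_{x\to-\infty} xg(x)$ equals the same value $a$ (both one-sided limits equal the ratio of the leading coefficients of $xP$ and $Q$, read as $0$ when $\deg(xP) < \deg Q$).

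Now fix any $0 < \delta < \Delta$ and let $A$ be a function of class $C$ that is continuous, vanishes on $[-\delta,\delta]$, and has $A(x) = A_1 := -a$ for $x \le -\Delta$ and $A(x) = A_2 := a$ for $x \ge \Delta$; for instance one may take $A$ piecewise linear. Then outside $[-\Delta,\Delta]$ one has $A(x)|x|^{-1} = a/x$: for $x > \Delta$ this is immediate, and for $x < -\Delta$ one gets $A_1/(-x) = a/x$ as well. Put $h(x) := g(x) - A(x)|x|^{-1}$. Near the origin $A \equiv 0$, so $h = g$ there, which is $C^\infty$ since $Q$ has no real zeros; on the rest of $\R$ the function $h$ is a difference of continuous functions; hence $h \in C(\R)$. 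For $|x|$ large, $h(x) = g(x) - a/x = \frac{xP(x) - aQ(x)}{x\,Q(x)}$, and because the degree-$\deg Q$ terms of $xP$ and $aQ$ cancel (or $a = 0$ and $\deg(xP) < \deg Q$), the numerator has degree at most $\deg Q - 1$ while the denominator has degree $\deg Q + 1$; thus $h(x) = O(x^{-2})$ as $|x|\to\infty$, so $\sum_{n\in\Z} \esssup_{n<x<n+1}|h(x)| < \infty$ and therefore $h \in W \cap C(\R) = W_0$.

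Since $g(x) = A(x)|x|^{-1} + h(x)$ with $A$ of class $C$, $\gamma = -1$ and $h \in W_0$, Theorem~\ref{main} applies and gives that for every $c > 0$ the set of $\alpha > 0$ with $(\alpha, c/\alpha) \in \F_g$ is open. One also checks directly that $g \in L^2(\R)$ (it is bounded and $O(x^{-1})$ at infinity), so there is no need to restrict to Schwartz test functions. The one step I would write out with care is the cancellation of the leading terms in $xP(x) - aQ(x)$, which is exactly what upgrades the remainder from $O(x^{-1})$ to $O(x^{-2})$ and so places $h$ in the Wiener algebra rather than merely in $L^\infty$; the remaining verifications are routine.
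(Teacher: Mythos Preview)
Your proof is correct and follows essentially the same route as the paper: you take $\gamma=-1$, choose $A$ of class $C$ with plateau values $\pm a$ where $a=\lim_{x\to\infty}xg(x)=b_{n-1}/a_n$, and verify that the remainder $h=g-A(x)|x|^{-1}$ is continuous, bounded and $O(x^{-2})$, hence in $W_0$, so Theorem~\ref{main} applies. Your write-up is slightly more detailed than the paper's (you spell out the limit computation, the continuity near the origin, and the cancellation in $xP(x)-aQ(x)$), but the argument is the same.
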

\begin{proof}
Let $Q(x) = a_n x^n + \ldots + a_0$, $P(x) = b_{n-1}x^{n-1} + \ldots + b_0$, where $a_n\neq 0$. Take a function $A(x)$ from the class $C$ which is equal to $\frac{b_{n-1}}{a_n}$ for big positive $x$ and which is equal to $-\frac{b_{n-1}}{a_n}$ for big negative $x$. Then, the function $g(x) - A(x)|x|^{-1}$ is continuous, bounded and decays like $O(\frac{1}{x^2})$ for big $x$. Therefore, it belongs to the continuous Wiener algebra and we can apply Theorem \ref{main}.
\end{proof}

In all the examples we have mentioned so far, the non-openness of the frame set came from the points $(\alpha, \beta)$ with $\alpha\beta = 1$. So, it is reasonable to assume that this is due to some "boundary effects" and that the frame set will be open if we restrict our attention to the pairs with $\alpha\beta < 1$. We will now discuss two examples which show that even if we exclude the hyperbola $\alpha\beta = 1$ we cannot guarantee the openness of the frame set.

First of all, if the function $g$ is bounded and supported on the interval $[a, b]$ then it automatically belongs to the Wiener algebra. It is easy to see that if $\alpha > b-a$ then $(\alpha, \beta)$ is not in the frame set simply because the supports of $g_{n, m}$ do not cover the whole $\R$. Similarly, if $\alpha = b-a$ then the system is a frame if and only if $\alpha\beta \le 1$ and $\essinf_{x\in[a,b]} |g(x)| > 0$. So, for the function $g(x) = \chi_{[a, b]}(x)$ points $(b-a, \beta)$ lie in $\F_g$ for all $\beta \le \frac{1}{b-a}$ but no point with $\alpha > b-a$ belongs to the frame set. In general, the frame set for the characteristic function of an interval is a very complicated picture known as Janssen's tie \cite{DS, Jans}.

The second example, which was the motivation for the present work, is the function $g(x) = \frac{1}{x+i} + \frac{1}{x+i+1}$. Together with Yurii Belov we proved the following theorem.
\begin{theorem}
Let $\alpha, \beta > 0$ with $\alpha \beta < 1$ and $g(x) = \frac{1}{x+i} + \frac{1}{x+i+1}$. The point $(\alpha, \beta)$ does not belong to $\F_g$ if and only if $\beta \in \N$ or there exist numbers $n, m\in \N$ such that $\frac{n}{\alpha} = (n-1)\beta + m$ and $\beta \ge [\frac{m}{2}] + \frac{1}{2}$, and if $\gcd(m, 2n) = 1$ then the last inequality is strict.
\end{theorem}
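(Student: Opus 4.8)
\emph{Reductions.} Write $g=g_1+g_2$ with $g_1(x)=\frac{1}{x+i}$ and $g_2(x)=g_1(x-1)=\frac{1}{x+1+i}$, and let $T_a,M_b$ denote translation by $a$ and modulation by $b$, so that $g_{n,m}=M_{m\beta}T_{n\alpha}g$. Each $g_j$ generates a Gabor frame whenever $\alpha\beta\le 1$ \cite{Jans2}, so the analysis operator $C_g\colon f\mapsto(\langle f,g_{n,m}\rangle)_{n,m}$ is bounded (it equals $C_{g_1}+C_{g_2}$) and throughout only the lower bound in \eqref{frame bound} is in question. On the Fourier side $\hat g_1(\xi)=-2\pi i\,e^{-2\pi\xi}\chi_{(0,\infty)}(\xi)$, and since $g_2=T_1g_1$,
\[
\hat g(\xi)=\bigl(1+e^{-2\pi i\xi}\bigr)\,\hat g_1(\xi),
\]
a one-sided, exponentially decaying function whose only zeros on $(0,\infty)$ are the simple zeros at the positive half-integers $\frac12,\frac32,\dots$. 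Finally $\G(g;\alpha,\beta)$ is a frame iff $\G(\hat g;\beta,\alpha)$ is, so I will use whichever of the two pictures is convenient.

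\emph{The case $\beta\in\N$.} Here $M_{m\beta}T_1=e^{2\pi i m\beta}T_1M_{m\beta}=T_1M_{m\beta}$ for every $m$, and $T_1$ commutes with every $T_{n\alpha}$; since $g=(I+T_1)g_1$ this gives $g_{n,m}=(I+T_1)(g_1)_{n,m}$, hence $C_gf=C_{g_1}\bigl((I+T_{-1})f\bigr)$. Choosing $f_k$ with $\hat f_k$ an $L^2$-normalised bump shrinking onto $\xi=\frac12$, the multiplier $1+e^{2\pi i\xi}$ of $I+T_{-1}$ vanishes there, so $\|(I+T_{-1})f_k\|\to0$, and boundedness of $C_{g_1}$ forces $\|C_gf_k\|\to0$ while $\|f_k\|=1$. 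Thus $C_g$ is not bounded below and $(\alpha,\beta)\notin\F_g$, which settles the ``$\beta\in\N$'' alternative.

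\emph{The resonance curves and their complement.} For a window with one-sided, exponentially decaying Fourier transform the Gabor frame operator can be written down rather explicitly, since the translates $\hat g(\cdot-k\beta)$ have nested half-line supports, in the spirit of Janssen's frame-bound computations for the one-sided exponential. Carrying this out for $\hat g=(1+e^{-2\pi i\xi})\hat g_1(\xi)$, the lower frame bound for $\G(g;\alpha,\beta)$ becomes equivalent to the uniform invertibility of a family of explicit finite matrices whose entries are geometric series in the two decay rates produced by the poles $-i$ and $-1-i$ of $g$; this description holds for all $\alpha\beta<1$, rational or not (and reduces to the Zibulski--Zeevi matrix when $\alpha\beta$ is rational). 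These matrices are invertible except precisely when $\frac{n}{\alpha}=(n-1)\beta+m$ for some $n,m\in\N$: along such a line the zeros of the periodic factor $1+e^{-2\pi i\xi}$ at the half-integers fall onto lattice points, and the matrix degenerates once the period window (of length comparable to $\beta$) is long enough to contain the requisite number of these zeros — this is the source of the threshold $\beta\ge[\frac m2]+\frac12$. A half-integer zero landing exactly at the boundary of the window yields only an asymptotic rank deficiency (a non-strict inequality), whereas one strictly inside yields a genuine rank drop (a strict one); a short computation shows the boundary coincidence happens exactly when $\gcd(m,2n)=1$, which accounts for the last clause. An alternative route to the ``bad'' inclusion is Ron--Shen duality: $\G(g;\alpha,\beta)$ is a frame iff $\G(g;1/\beta,1/\alpha)$ is a Riesz sequence, and for the rational function $g$ an $\ell^2$ dependence (or near-dependence) among $\{M_{m/\alpha}T_{n/\beta}g\}$ can be written out explicitly via partial fractions exactly under these arithmetic conditions.

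\emph{Main obstacle.} The soft ingredients — the upper bound, the $\beta\in\N$ reduction, and even setting up the explicit frame-operator/Zibulski--Zeevi description — are routine; the real work is the combinatorial analysis of \emph{when} the relevant matrices drop rank. Extracting the precise threshold $[\frac m2]+\frac12$ rather than just $\Theta(m)$, and in particular isolating the borderline configurations that force the strict inequality through $\gcd(m,2n)=1$, requires careful bookkeeping of which lattice points coincide with the half-integer zeros of $\hat g$ and a direct treatment of the degenerate cases, where no continuity or perturbation argument is available.
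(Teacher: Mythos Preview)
The paper does not contain a proof of this theorem. It is stated as a result obtained jointly with Belov and is invoked only as motivation for the main openness theorem; no argument is given in the text. So there is no ``paper's own proof'' to compare your proposal against.

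As for the proposal on its own merits: the reductions and the $\beta\in\N$ case are fine in spirit (note a sign slip --- with $T_af(x)=f(x-a)$ one has $g_2=T_{-1}g_1$, not $T_1g_1$, though this is harmless). The substantive part, however, is not a proof but a narrative. Sentences such as ``Carrying this out \ldots\ the lower frame bound becomes equivalent to the uniform invertibility of a family of explicit finite matrices'', ``these matrices are invertible except precisely when $\frac{n}{\alpha}=(n-1)\beta+m$'', and ``a short computation shows the boundary coincidence happens exactly when $\gcd(m,2n)=1$'' each stand in for a nontrivial computation that is never actually performed. You yourself identify, in the ``Main obstacle'' paragraph, that extracting the exact threshold $[\tfrac{m}{2}]+\tfrac12$ and the $\gcd(m,2n)=1$ clause from the matrix rank analysis is the heart of the matter --- and that is precisely the part the proposal does not supply. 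The heuristic about half-integer zeros of $1+e^{-2\pi i\xi}$ ``falling onto lattice points'' is suggestive, but it does not by itself explain why the condition takes the form $\frac{n}{\alpha}=(n-1)\beta+m$ (rather than, say, $n\beta+m$), nor why the threshold is $[\tfrac{m}{2}]+\tfrac12$ rather than some other function of $m$, nor why the strict/non-strict dichotomy is governed by $\gcd(m,2n)$.

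In short: the outline is plausible, but the proposal is a strategy sketch, not a proof; the genuine gap is exactly the combinatorial rank analysis you flag as the obstacle.
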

Every pair of numbers $n, m\in \N$ such that $\gcd (m, 2n) = 1$ gives us a point at which the intersection of the frame set and $\{(\alpha, \beta): \alpha\beta < 1\}$ is not open, thus there are countably many points at which the openness fails. In particular, the point $(1, \beta)$ belongs to $\F_g$ if and only if $\beta \le \frac{1}{2}$, so the point $(1, \frac{1}{2})$ is in $\F_g$, but there are points which are arbitrarily close to it which are not $\F_g$. Moreover, we showed that this is the only rational function of degree $2$ up to dilations and complex shifts for which the intersection of the frame set and $\{(\alpha, \beta): \alpha\beta < 1\}$ is not open.
\section{Proof of Theorem \ref{main}}
Let function $g$ be as in the statement of Theorem \ref{main} and assume that $(\alpha, \beta)$ is in $\F_g$. We want to show that for numbers $s$ sufficiently close to $1$ the points $(s\alpha, \frac{\beta}{s})$ are also in $\F_g$. The key idea of the proof is as follows: the point $(s\alpha, \frac{\beta}{s})$ is in the frame set for $g(x)$ if and only if $(\alpha, \beta)$ is in the frame set for the function $g(sx)$. Clearly this is the same as $(\alpha, \beta)$ being in the frame set of $s^{-\gamma}g(sx)$ (we just rescaled the function, so we only have to rescale the constants $A$ and $B$ in \eqref{frame bound}). Note that here $\gamma$ is from the statement of Theorem \ref{main}.

We are going to show that the frame constants for $g(x)$ and $s^{-\gamma}g(sx)$ with respect to the lattice $\alpha\Z\times \beta\Z$ are close if $s$ is close to $1$, thereby showing that if they are positive for $g(x)$ then they are positive for  $s^{-\gamma}g(sx)$ if $s$ is close enough to $1$. Put $g^s(x) = g(x) - s^{-\gamma}g(sx)$. Let $f$ be an arbitrary function from $L^2(\R)$ and consider the following expression:
$$U_s(f) = \sum_{n, m\in\Z} |\langle f, g^s_{n, m}\rangle|^2,$$
where $g^s_{n, m} = g^s(x-\alpha n)e^{2\pi i x \beta m}$. If we know that this quantity is bounded by $C||f||^2$ for some $0 < C < A$ and all $f\in L^2(\R)$, where $A$ is the lower frame bound for $g$, then by the triangle inequality in $\ell^2(\Z^2)$ we would have that the lower frame bound for $s^{-\gamma}g(sx)$ is at least $(\sqrt{A}-\sqrt{C})^2$ while the upper frame bound for $s^{-\gamma}g(sx)$ is at most $(\sqrt{B}+\sqrt{C})^2$, where $B$ is the upper frame bound for $g$. In particular, $(\alpha, \beta)$ would be in the frame set for $s^{-\gamma}g(sx)$. So, it remains to estimate $U_s(f)$. To do so we need the following well-known estimate for the Gabor sums.
\begin{lemma}[\hspace{1sp}{\cite[Corollary 6.2.3]{Gro}}]\label{W lemma}
Let $G\in W$, $\alpha, \beta > 0$. There exists $c = c(\alpha, \beta)$ such that for all $f\in L^2(\R)$ we have
$$\sum_{n, m\in\Z} |\langle f, G_{n, m}\rangle|^2 \le c ||f||_{L^2}^2 ||G||_W^2,$$
where $G_{n, m} = G(x-\alpha n)e^{2\pi i x \beta m}$.
\end{lemma}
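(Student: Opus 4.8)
The plan is to run the standard amalgam-space argument: fix the translation index $n$, recognize each inner product as a sample of a Fourier transform, and then exploit the fact that the Wiener norm controls sums of samples of $|G|$ taken along an arithmetic progression.

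First I would fix $n \in \Z$ and set $F_n(x) = f(x)\overline{G(x - \alpha n)}$. Since $G\in W$ embeds into $L^2(\R)$ (indeed $\norm{G}_{L^2}\le \norm{G}_W$), the product $F_n$ of two $L^2$ functions lies in $L^1(\R)$, so $\widehat{F_n}$ is well defined and $\langle f, G_{n,m}\rangle = \widehat{F_n}(m\beta)$. The next step is to pass to the $\tfrac1\beta$-periodization $\Phi_n(x) = \sum_{k\in\Z} F_n\bigl(x + \tfrac{k}{\beta}\bigr)$, whose $m$-th Fourier coefficient on the circle of length $\tfrac1\beta$ equals $\beta\,\widehat{F_n}(m\beta)$. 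Parseval on the torus then gives
$$\sum_{m\in\Z}|\langle f, G_{n,m}\rangle|^2 = \frac{1}{\beta}\int_0^{1/\beta}|\Phi_n(x)|^2\,dx,$$
and summing over $n$ reduces everything to a pointwise estimate of $\sum_{n}|\Phi_n(x)|^2$.

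For that estimate I would write $\Phi_n(x) = \sum_k f\bigl(x + \tfrac{k}{\beta}\bigr)\overline{G\bigl(x + \tfrac{k}{\beta} - \alpha n\bigr)}$ and apply Cauchy--Schwarz to the inner sum, splitting the weight $|G(x+\tfrac{k}{\beta} - \alpha n)|$ as $\sqrt{|G|}\cdot\sqrt{|G|}$, to obtain
$$|\Phi_n(x)|^2 \le \Bigl(\sum_k |f(x+\tfrac{k}{\beta})|^2\,|G(x+\tfrac{k}{\beta}-\alpha n)|\Bigr)\Bigl(\sum_k |G(x+\tfrac{k}{\beta}-\alpha n)|\Bigr).$$
The key fact --- the real content of the lemma --- is that a sum of samples of $|G|$ along any arithmetic progression of step $\delta$ is dominated by a multiple of $\norm{G}_W$: each unit interval contains at most $\lfloor 1/\delta\rfloor + 1$ grid points, so the sum is at most $(\lfloor 1/\delta\rfloor + 1)\norm{G}_W$, uniformly in the offset. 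Taking $\delta = \tfrac1\beta$ bounds the second factor by $(\lfloor\beta\rfloor+1)\norm{G}_W$; summing the first factor over $n$ and applying the same fact with $\delta = \alpha$ to the $n$-sum of $|G(x+\tfrac{k}{\beta}-\alpha n)|$ then yields
$$\sum_n|\Phi_n(x)|^2 \le (\lfloor\beta\rfloor+1)(\lfloor 1/\alpha\rfloor+1)\,\norm{G}_W^2\sum_k|f(x+\tfrac{k}{\beta})|^2.$$

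It remains to integrate in $x$ over $[0,\tfrac1\beta)$ and invoke the tiling identity $\int_0^{1/\beta}\sum_k|f(x+\tfrac{k}{\beta})|^2\,dx = \norm{f}_{L^2}^2$, which gives the claimed bound with $c(\alpha,\beta) = \tfrac1\beta(\lfloor\beta\rfloor+1)(\lfloor 1/\alpha\rfloor+1)$. I expect the main obstacle to be bookkeeping of convergence rather than any genuine difficulty: the periodization and the interchange of the orders of summation need justification, and the Parseval step requires knowing a priori that $\Phi_n\in L^2(0,\tfrac1\beta)$. The cleanest route is to prove the inequality first for bounded, compactly supported $f$ --- where $F_n$ has compact support, so $\Phi_n$ is a finite sum of bounded functions and every interchange is legitimate --- and then extend to all of $L^2(\R)$ by density. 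Once this is in place, the double application of Cauchy--Schwarz together with the two grid-sampling bounds does all the work.
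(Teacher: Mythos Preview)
The paper does not give its own proof of this lemma; it simply quotes it from Gr\"ochenig's book. Your argument is precisely the standard amalgam/Walnut estimate that appears there: periodize $f\overline{G(\cdot-\alpha n)}$, apply Parseval on the torus of length $1/\beta$, then use Cauchy--Schwarz with the split $|G|=\sqrt{|G|}\cdot\sqrt{|G|}$ together with the grid-sampling bound $\sum_k|G(y+k\delta)|\le(\lfloor 1/\delta\rfloor+1)\|G\|_W$, applied once in the $k$-direction and once in the $n$-direction. So there is nothing to compare --- you have reproduced the reference's proof.

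One small wording issue worth cleaning up: since $\|\cdot\|_W$ is defined via essential suprema, the sampling bound $\sum_k|G(y+k\delta)|\le C\|G\|_W$ is not literally ``uniform in the offset'' but only holds for almost every $y$ (the bad set being a countable union of null sets). This is harmless because you integrate in $x$ over $[0,1/\beta)$ at the end, but the phrase should be adjusted accordingly.
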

With this lemma at hand, to estimate $U_s(f)$ all we need to do is to estimate $||g^s||_W$. We are going to show that $||g^s||_W \to 0$ as $s\to 1$, which would clearly be enough to get the desired estimate. We have
$$g^s(x) = (h(x) - s^{-\gamma}h(sx)) + (A(x)-A(sx))|x|^\gamma,$$
where crucially $s^{-\gamma}$ cancels in the second term. We are going to show that the $W$-norm of both of these terms tends to $0$ as $s$ tends to $1$. We start with the (simpler) second term.

Without loss of generality we can assume that $\frac{1}{2} < s < 2$. If $|x| < \frac{\delta}{2}$ or $|x| > 2\Delta$ then $A(x) = A(sx)$, where $\delta$ and $\Delta$ are from the definition of the function $A$ belonging to the class $C$. For $\frac{\delta}{2} < |x| < 2\Delta$ the value of $|x|^\gamma$ is uniformly bounded, so we can just bound $||A(x)-A(sx)||_W$.

The intervals $\frac{\delta}{2} \le x \le 2\Delta$ and $-2\Delta \le x \le -\frac{\delta}{2}$ can be covered by finitely many intervals of the form $[n, n+1], n\in \Z$, and we can assume that all the points in all of these intervals are at most $N$ in absolute value. For such $x$ the difference between $x$ and $sx$ is at most $|s-1|N$, while both $x$ and $sx$ lie in the interval $[-2N, 2N]$ since $s < 2$. Since $A$ is continuous, it is uniformly continuous on the compact interval $[-2N, 2N]$ so for every $\eps > 0$ there exists $\delta > 0$ such that if $|s-1|N$ is less than $\delta$ then $|A(x)-A(sx)|$ is less than $\eps$. In that case $||A(x)-A(sx)||_W \le (4N+1)\eps$. Since $\eps$ is arbitrary, $||A(x)-A(sx)||_W\to 0$ as $s\to 1$, as required.

We now turn to bounding $|| h(x) - s^{-\gamma}h(sx)||_W$. First, we split this as $$h(x)(1-s^{-\gamma}) + s^{-\gamma}(h(x)-h(sx)).$$

The norm of the first term is $||h||_W(1-s^{-\gamma})$ which tends to $0$ as $s$ tends to $1$. For the second term, if $\frac{1}{2} \le s \le 2$ then $s^{-\gamma}$ is at most $2^{|\gamma|}$. So, it remains to bound $||h(x)-h(sx)||_W$. We have
\begin{equation}\label{W continuity}
||h(x)-h(sx)||_W = \sum_{n\in \Z} \esssup_{n < x < n+1} |h(x)-h(sx)|.
\end{equation}

Since the function $h$ is in $W$, for any $\eps > 0$ there exists $N$ such that
$$\sum_{|n| > N} \esssup_{n < x < n+1} |h(x)| < \eps.$$

We are going to split the sum in \eqref{W continuity} into the terms with $|n| \le 4N+4$ and $|n| > 4N+4$. For the first sum by the argument with uniform continuity similar to the estimate for $||A(x)-A(sx)||_W$ we can see that it tends to $0$ as $s$ tends to $1$. Note that here it is important that $h$ is continuous, that is that $h$ is in $W_0$ and not just in  $W$. For the second sum, we will simply bound $|h(x)-h(sx)| \le |h(x)|+|h(sx)|$ and so it is enough to estimate
$$\sum_{|n|>4N+4} \esssup_{n < x<n+1} |h(x)| + \sum_{|n|>4N+4}\esssup_{n < x < n+1} |h(sx)|.$$

The first sum we already know is at most $\eps$. For the second sum, since $\frac{1}{2} < s < 2$, we can bound each singular essential supremum by a sum of at most three essential supremums from the first sum, and each essential supremum from the first sum will be used at most $3$ times to bound a term in the second sum. Hence, the second sum is at most $9\eps$ and the whole expression is at most $10\eps$. Since $\eps > 0$ was arbitrary we get that $||h(x)-h(sx)||_W\to 0$ as $s\to 1$.
\begin{remark}
The only two places where we used the fact that $h\in W_0$ are the estimate in Lemma \ref{W lemma} and the fact that $||h(x)-h(sx)||_W\to 0$ as $s\to 1$. So, we can replace $W_0$ in the statement of the Theorem \ref{main} by any other space of functions satisfying both of these conditions. In particular, it holds if $h\in \widehat{W_0}$, that is if $\hat{h}\in W_0$.
\end{remark}
\subsection*{Acknowledgments} I would like to thank Yurii Belov and Yurii Lyubarskii for helpful discussions and for encouraging me to write this note. I also would like to thank Hans Georg Feichtinger and Ilya Zlotnikov for the helpful comments on the manuscript. This work was supported by BSF Grant 2020019, ISF Grant 1288/21, and by The Raymond and Beverly Sackler Post-Doctoral Scholarship. 



\end{document}